\newtheorem{theorem}{Theorem}[section]
\newtheorem*{theorem*}{Theorem}
\newtheorem*{lemma*}{Lemma}
\newtheorem*{remark*}{Remark}
\newtheorem*{definition*}{Definition}
\newtheorem*{proposition*}{Proposition}
\newtheorem*{corollary*}{Corollary}
\numberwithin{equation}{section}
\newcommand{\real}{\mathbb{R}}
\let\ced=\c         
\def\qed{\,\unskip\kern 6pt \penalty 500
\raise -2pt\hbox{\vrule \vbox to8pt{\hrule width 6pt
\vfill\hrule}\vrule}\par}
\definecolor{darkblue}{rgb}{0.05, .05, .65}
\definecolor{darkgreen}{rgb}{0.1, .65, .1}
\definecolor{darkred}{rgb}{0.8,0,0}
\newcommand{\beqn}{\begin{equation}}
\newcommand{\eeqn}{\end{equation}}
\newcommand{\bear}{\begin{eqnarray}}
\newcommand{\eear}{\end{eqnarray}}
\newcommand{\bean}{\begin{eqnarray*}}
\newcommand{\eean}{\end{eqnarray*}}
\begin{document}

\title{\huge \bf A new transformation for the subcritical fast diffusion equation with source and applications}

\author{
\Large Razvan Gabriel Iagar\,\footnote{Departamento de Matem\'{a}tica
Aplicada, Ciencia e Ingenieria de los Materiales y Tecnologia
Electr\'onica, Universidad Rey Juan Carlos, M\'{o}stoles,
28933, Madrid, Spain, \textit{e-mail:} razvan.iagar@urjc.es},\\
[4pt] \Large Ariel S\'{a}nchez,\footnote{Departamento de Matem\'{a}tica
Aplicada, Ciencia e Ingenieria de los Materiales y Tecnologia
Electr\'onica, Universidad Rey Juan Carlos, M\'{o}stoles,
28933, Madrid, Spain, \textit{e-mail:} ariel.sanchez@urjc.es}\\
[4pt] }
\date{}
\maketitle

\begin{abstract}
A new transformation for radially symmetric solutions to the subcritical fast diffusion equation with spatially inhomogeneous source
$$
\partial_tu=\Delta u^m+|x|^{\sigma}u^p,
$$
posed for $(x,t)\in\real^N\times(0,\infty)$ and with dimension and exponents
$$
N\geq3, \quad 0<m<m_c:=\frac{N-2}{N}, \quad \sigma\in(-2,\infty),
$$
is introduced. It plays a role of a kind of symmetry with respect to the critical exponents
$$
m_s=\frac{N-2}{N+2}, \quad p_L(\sigma)=1+\frac{\sigma(1-m)}{2}, \quad p_s(\sigma)=\frac{m(N+2\sigma+2)}{N-2}.
$$ 
This transformation is then applied for classifying self-similar solutions with or without finite time blow-up to the subcritical fast diffusion equation with source when $p>\max\{1,p_L(\sigma)\}$, having as starting point previous results by the authors. 
\end{abstract}

\

\noindent {\bf Mathematics Subject Classification 2020:} 35B33, 35B36, 35B44, 35C06, 35K57.

\smallskip

\noindent {\bf Keywords and phrases:} fast diffusion equation, self-map, finite time blow-up, spatially inhomogeneous source, Sobolev critical exponent, self-similar solutions.

\section{Introduction}

The goal of this paper is to introduce a new \emph{self-map} acting at the level of radially symmetric solutions to the fast diffusion equation with spatially inhomogeneous source
\begin{equation}\label{eq1}
\partial_tu=\Delta u^m+|x|^{\sigma}u^p, \quad (x,t)\in\real^N\times(0,T), \quad T>0,
\end{equation}
posed in dimension $N\geq3$ and in the range of exponents
\begin{equation}\label{range.exp}
m\in\left(0,\frac{N-2}{N}\right), \quad \sigma\in(-2,\infty), \quad p>m
\end{equation}
By self-map we understand a transformation mapping radially symmetric solutions to Eq. \eqref{eq1} with some set of parameters $(m,N,p,\sigma)$ into radially symmetric solutions to the same equation with a different set of parameters (depending on the first ones). We then apply our self-map to deduce a classification of solutions in self-similar form to Eq. \eqref{eq1}, presenting or not finite time blow-up, under some conditions more restrictive than \eqref{range.exp} that will be specified later, starting from recent previous results by the authors.

The fast diffusion equation
\begin{equation}\label{FDE}
u_t=\Delta u^m, \qquad 0<m<1,
\end{equation}
is one of the most important and analyzed models in the theory of nonlinear diffusion equations, both due to its numerous applications and to its quite unexpected mathematical features. A number of applications of fast diffusion in other sciences and engineering such as, for example, gas kinetics, thin liquid film dynamics or anomalous diffusion of hydrogen plasma across a magnetic field, are mentioned in the Introduction of the recent paper by Bonforte and Figalli \cite{BF24} to which we refer (see also references therein), while a good exposition of the mathematical theory can be found in the monograph \cite{VazSmooth} and in the above mentioned work \cite{BF24}.

The mathematical analysis of the fast diffusion equation depends on two fundamental exponents
\begin{equation}\label{crit.exp}
m_c=\frac{(N-2)_{+}}{N}, \qquad m_s=\frac{(N-2)_+}{N+2},
\end{equation}
usually called the \emph{critical exponent} and the \emph{Sobolev exponent}. The former splits the interval $(0,1)$ into the \emph{supercritical range} $m_c<m<1$, where \eqref{FDE} is conservative, preserving the $L^1$ norm of the initial condition along the evolution, and the \emph{subcritical range} $0<m<m_c$, where the phenomenon of finite time extinction is characteristic due to a loss of mass at infinity, a rather unexpected feature explained in \cite[Section 5.5]{VazSmooth}. The latter exponent has a significant influence inside the subcritical range and is closely connected with the Yamabe problem in conformal geometry (see \cite{VazSmooth}). Another major novelty introduced by the subcritical range is the existence of a celebrated branch of self-similar solutions to Eq. \eqref{FDE} known as \emph{anomalous solutions}, see \cite{Ki93, PZ95}, having the following form and behavior at infinity
\begin{equation}\label{anomalous}
u(x,t)=(T-t)^{\alpha}f(|x|(T-t)^{\beta}), \quad f(\xi)\sim C\xi^{-(N-2)/m}, \quad {\rm as} \ \xi\to\infty,
\end{equation}
where the exponents $\alpha$ and $\beta$ and profile $f$ are obtained through an analysis employing dynamical systems techniques (see \cite[Section 7.2]{VazSmooth} for a survey of this theory). These solutions model the finite time extinction in the subcritical range, see \cite{GP97, dPS01}.

Coming back to Eq. \eqref{eq1}, its main mathematical interest is focused on the effect of the competition between the diffusion term and the source term, which tends to increase the $L^1$ norm of a solution, while, as explained above, in the subcritical case $m\in(0,m_c)$, the diffusion alone loses mass and tends to vanish in a finite time. With this motivation in mind, the authors, as part of a larger project of understanding the effect of the presence of either unbounded (at infinity) or singular weights on the dynamical properties of nonlinear diffusion equations, decided to explore this competition and focus on the mathematical properties of Eq. \eqref{eq1}. Previous experience in the study of Eq. \eqref{eq1} in the range \eqref{range.exp} or its slow diffusion counterpart $m>1$ led to the establishment of several critical exponents related to the (inhomogeneous) source term. On the one hand,
\begin{equation}\label{crit.exp.Fujita}
p_L(\sigma)=1+\frac{\sigma(1-m)}{2}, \quad p_F(\sigma)=m+\frac{\sigma+2}{N},
\end{equation}
are related to the blow-up properties of the solutions; indeed, it has been shown that, if $p\leq p_L(\sigma)$, either with $m>1$ in \cite{IMS23} or with $m<1$ in \cite{IS22c, IMS23b}, finite time blow-up is not expected to take place, according to the behavior of the available self-similar solutions in the corresponding range. The exponent $p_F(\sigma)$ is known as the \emph{Fujita type exponent}, separating (if $m\in(m_c,\infty)$) between the range $1<p\leq p_F(\sigma)$ where all the solutions blow up in finite time and the complementary range where there are global solutions, see \cite{Qi98, Su02}. On the other hand, in dimension $N\geq3$, the following critical exponents
\begin{equation}\label{crit.exp.Sobolev}
p_c(\sigma)=\frac{m(N+\sigma)}{N-2}, \quad p_s(\sigma)=\frac{m(N+2\sigma+2)}{N-2}
\end{equation}
are bifurcation exponents for the behavior of self-similar solutions and for other properties of general solutions, as seen in \cite{FT00, IS25, GV97, S4}. We also set
\begin{equation}\label{const.L}
L:=\sigma(m-1)+2(p-1),
\end{equation}
noticing that $p=p_L(\sigma)$ is equivalent to $L=0$.

In our precedent works related to Eq. \eqref{eq1} with exponents as in \eqref{range.exp}, we have established in \cite{IS22c} a new branch of anomalous self-similar solutions with exponential form in the subcritical range $m\in(0,m_c)$ and with $p=p_L(\sigma)$; then, we have classified in \cite{IMS23b} the self-similar solutions (either global or vanishing in finite time) for $m\in(0,m_c)$ and $1<p<p_L(\sigma)$, and finally we did the same in the supercritical range $m\in[m_c,1)$ and $p>p_L(\sigma)$ in \cite{IS25c}. Thus, in the present work we complete the panorama by addressing the remaining range not yet studied, that is $m\in(0,m_c)$ but with $p>p_L(\sigma)$, employing a new transformation (self-map) for the radially symmetric version of Eq. \eqref{eq1} which is of fully independent interest.

\medskip

\noindent \textbf{Main results.} As previously explained, we first introduce a new transformation between radially symmetric solutions to Eq. \eqref{eq1}. Indeed, Eq. \eqref{eq1} writes in radial variables $(r,t)$, $r=|x|$, as
\begin{equation}\label{eq1.rad}
u_t=(u^m)_{rr}+\frac{N-1}{r}(u^m)_r+r^{\sigma}u^p,
\end{equation}
which can obviously be generalized, as an independent equation, to any real parameter $N$ instead of the dimension, as $N$ is just a coefficient in \eqref{eq1.rad}. With this convention, which is typical when dealing with radially symmetric variables and/or ordinary differential equations, we have the following
\begin{theorem}\label{th.transf}
Let $N>2$, $m\in(0,m_c)$ and let $u$ be a (classical for $r\in(0,\infty)$) solution to Eq. \eqref{eq1.rad}. Then the function $\overline{u}(\overline{r},t)$ given by
\begin{equation}\label{transf}
\begin{split}
&\overline{u}(\overline{r},t)=\frac{1}{C_1}r^{(N-2)/m}u(r,t), \quad C_1=\left[\frac{4m^2}{(mN-N+2)^2}\right]^{\sigma/L}, \\
&\overline{r}=C_2r^{\theta}, \quad C_2=C_1^{-(p-1)/\sigma}, \quad \theta=\frac{mN-N+2}{2m},
\end{split}
\end{equation}
is a solution to Eq. \eqref{eq1.rad} with independent variable $\overline{r}$ and parameters given by
\begin{equation}\label{transf.exp}
\overline{N}=-\frac{2(N-2m-2)}{mN-N+2}, \quad \overline{\sigma}=-\frac{2[(N-2)(p-1)-m\sigma]}{mN-N+2}.
\end{equation}
\end{theorem}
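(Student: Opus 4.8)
The plan is to verify directly that the transformed function $\overline{u}(\overline{r},t)$ satisfies the radial equation \eqref{eq1.rad} with the new parameters $\overline{N}$ and $\overline{\sigma}$ by substitution. This is fundamentally a computational verification, so the real work lies in organizing the change of variables cleanly enough that the algebra closes. First I would write the ansatz in the separated form $\overline{u} = C_1^{-1}\,r^{(N-2)/m}\,u$ and $\overline{r}=C_2 r^\theta$, and compute the building blocks I will need: the time derivative $\partial_t\overline{u}$ (which is easy, since $t$ is untouched and the prefactors are $t$-independent), and then the spatial pieces. The key intermediate object is $\overline{u}^{\,m}$, for which I would record
\begin{equation*}
\overline{u}^{\,m}=C_1^{-m}\,r^{N-2}\,u^m,
\end{equation*}
so that the diffusion operator on the barred side, namely $(\overline{u}^{\,m})_{\overline{r}\overline{r}}+\frac{\overline{N}-1}{\overline{r}}(\overline{u}^{\,m})_{\overline{r}}$, can be pulled back to the $r$ variable using $\frac{d}{d\overline{r}}=\frac{1}{C_2\theta}r^{1-\theta}\frac{d}{dr}$.

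Next I would treat the two terms of \eqref{eq1.rad} separately on the barred side. For the reaction term I would compute $\overline{r}^{\,\overline{\sigma}}\,\overline{u}^{\,p}=C_2^{\overline{\sigma}}C_1^{-p}\,r^{\theta\overline{\sigma}+p(N-2)/m}\,u^p$ and check that, after multiplying through by the Jacobian factor that the diffusion pull-back forces out front, the power of $r$ collapses to exactly $r^\sigma u^p$ — this is precisely where the definitions of $\theta$, the exponent $(N-2)/m$, and the constant $C_2=C_1^{-(p-1)/\sigma}$ are engineered to conspire. Simultaneously I would expand the barred diffusion operator, pulled back to $r$, and match it against $(u^m)_{rr}+\frac{N-1}{r}(u^m)_r$. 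The diffusion match is the crucial computation: after applying the chain rule twice to convert $\partial_{\overline{r}}$ into $\partial_r$, one obtains a second-order operator in $r$ acting on $r^{N-2}u^m$ whose coefficients are rational in $r$; the requirement that the lower-order terms (the ones linear in $u^m$ and its first derivative coming from differentiating the $r^{N-2}$ weight) reorganize into the correct $\frac{N-1}{r}(u^m)_r$ form is exactly what pins down the value of $\overline{N}$ in \eqref{transf.exp}.

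The main obstacle I anticipate is bookkeeping rather than conceptual: one must verify that \emph{three} separate constraints hold simultaneously after a single choice of $\theta$, $C_1$, $C_2$. Specifically, the powers of $r$ in the diffusion term and in the reaction term must both reduce to the target powers, and the overall multiplicative constant produced by the diffusion pull-back must equal $1$ so that no spurious coefficient survives in front of $(u^m)_{rr}$. The definition $\theta=(mN-N+2)/(2m)$ is chosen to normalize the leading diffusion coefficient, while the particular form of $C_1=\big[4m^2/(mN-N+2)^2\big]^{\sigma/L}$ together with $C_2=C_1^{-(p-1)/\sigma}$, recalling $L=\sigma(m-1)+2(p-1)$ from \eqref{const.L}, is what forces the constant in the reaction term to match. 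I would therefore organize the verification by first fixing $\theta$ from the diffusion leading-order balance, then reading off $\overline{N}$ from the first-order diffusion balance, then $\overline{\sigma}$ from the power-of-$r$ balance in the reaction term, and finally checking that the constants $C_1,C_2$ make both multiplicative factors equal to $1$; the hypothesis $m<m_c$ guarantees $mN-N+2<0$ so that $\theta\neq 0$ and all the exponents above are well defined.
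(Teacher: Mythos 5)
Your strategy is the same one the paper uses: direct substitution, with $\theta$ fixed by annihilating the power of $r$ in front of the second-order diffusion term, $\overline{N}$ read off from the lower-order terms (after the zeroth-order terms in $u^m$ cancel, which is what the harmonic exponent $(N-2)/m$ is for), $\overline{\sigma}$ from the power of $r$ in the reaction term, and the constants from the requirement that no multiplicative factor survives. All of your exponent bookkeeping is correct and reproduces \eqref{transf.exp}.

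There is, however, one step of your plan that would fail if executed literally: the final check that the printed constants $C_1,C_2$ "make both multiplicative factors equal to $1$". They do not. Carrying out your own pull-back, the two normalization conditions are
\begin{equation*}
\theta^2\,C_1^{m-1}C_2^{2}=1, \qquad C_1^{\,p-1}=C_2^{\,\overline{\sigma}}
\end{equation*}
(the second because $\overline{r}^{\,\overline{\sigma}}=C_2^{\overline{\sigma}}r^{\theta\overline{\sigma}}$), and solving this linear system in $\log C_1,\log C_2$ forces
\begin{equation*}
C_1=\left[\frac{4m^2}{(mN-N+2)^2}\right]^{\overline{\sigma}/\overline{L}}, \qquad C_2=C_1^{(p-1)/\overline{\sigma}}, \qquad \overline{L}:=\overline{\sigma}(m-1)+2(p-1)=\frac{L}{\theta},
\end{equation*}
which is \emph{not} what \eqref{transf} states. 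Concretely, for $N=4$, $m=2/5$, $\sigma=1$, $p=2$ one gets $\theta=-1/2$, $\overline{N}=6$, $\overline{\sigma}=8$, and the conditions above give $C_1=2^{-40/7}$, $C_2=2^{-5/7}$, whereas \eqref{transf} gives $C_1=2^{10/7}$, $C_2=2^{-10/7}$, for which $\theta^2C_1^{m-1}C_2^2=2^{-40/7}\neq 1$: with the printed constants, $\overline{u}$ solves the barred equation only up to nontrivial constant factors in front of the diffusion and reaction terms. The paper's own proof carries the same defect (its displayed conditions $C_1^{m-1}C_2=\theta^{-2}$ and $C_1^{p-1}C_2^{\sigma}=1$ are not what the computation produces, and do not even imply the stated formula for $C_1$ with $L$ as in \eqref{const.L}). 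So do not organize the last step as a confirmation of the printed $C_1,C_2$; treat them as unknowns, impose the two conditions displayed above, and solve. The exponents $\theta$, $\overline{N}$, $\overline{\sigma}$ — and hence the substance of the theorem and everything it is used for later — are unaffected, but the constants in the statement must be corrected.
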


\noindent \textbf{Remark.} For fixed $p>m$, this transformation can be understood as a \emph{symmetry of the Sobolev exponents} $p_s(\sigma)$ and $p_s(\overline{\sigma})$ with respect to $p$, in the sense that
\begin{equation}\label{symm}
p-p_s(\overline{\sigma})=p_s(\sigma)-p.
\end{equation}
Moreover, the restriction $m\in(0,m_c)$ ensures that the interval $N\in(2,\infty)$ is mapped one-to-one into $\overline{N}\in(2,\infty)$.

Let us also observe that, in our range of interest $m\in(0,m_c)$, the transformation \eqref{transf} is an inversion, since $\theta<0$ in this range. Moreover, one can readily check, among other properties of this transformation, that it maps the interval $m\in(m_s,m_c)$ with respect to dimension $N$ in Eq. \eqref{eq1.rad} into the interval $m\in(0,m_s)$ with respect to dimension parameter $\overline{N}$ in Eq. \eqref{eq1.rad}. As a precedent, one can notice that the transformation \eqref{transf} is related to the ones introduced for the non-homogeneous porous medium equation in \cite{IRS13} and even more with the second transformation in \cite[Section 2.2]{IS23} for the particular case $\sigma_1=0$ in the notation therein, where a similar change of variable is noticed but discarded as not very useful in the slow diffusion range $m>1$. It appears that, in change, this inversion works very well for the subcritical fast diffusion range.

\medskip

\noindent \textbf{Application to self-similar solutions.} We apply this transformation, together with the results in \cite{IMS23b}, to complete the classification of the self-similar solutions to Eq. \eqref{eq1} in the subcritical case started in \cite{IS22c, IMS23b}, gathering both global solutions in the form
\begin{equation}\label{forward.SS}
u(x,t)=t^{-\alpha}f(\xi), \qquad \xi=|x|t^{-\beta},
\end{equation}
and solutions with finite time blow-up in the form
\begin{equation}\label{backward.SS}
u(x,t)=(T-t)^{-\alpha}f(\xi), \qquad \xi=|x|(T-t)^{-\beta}, \qquad T\in(0,\infty),
\end{equation}
where in both cases the self-similar exponents are given by
\begin{equation}\label{SS.exp}
\alpha=\frac{\sigma+2}{L}>0, \qquad \beta=\frac{p-m}{L}>0.
\end{equation}
By replacing the ansatz \eqref{forward.SS}, respectively \eqref{backward.SS} into Eq. \eqref{eq1}, we find by direct calculation that the profiles $f(\xi)$ of the self-similar solutions solve the following differential equations:
\begin{equation}\label{ODE.forward}
(f^m)''(\xi)+\frac{N-1}{\xi}(f^m)'(\xi)+\alpha f(\xi)+\beta\xi f'(\xi)+\xi^{\sigma}f^p(\xi)=0,
\end{equation}
in the case of self-similar solutions in the form \eqref{forward.SS}, or
\begin{equation}\label{ODE.backward}
(f^m)''(\xi)+\frac{N-1}{\xi}(f^m)'(\xi)-\alpha f(\xi)-\beta\xi f'(\xi)+\xi^{\sigma}f^p(\xi)=0,
\end{equation}
in the case of self-similar solutions in the form \eqref{backward.SS}. The next theorem gathers all the ranges of existence and non-existence in the range of exponents specified in the statement.
\begin{theorem}\label{th.subcrit}
Let $N\geq3$, $m\in(0,m_c)$, $\sigma>-2$ and $p>\max\{p_L(\sigma),1\}$.
\begin{enumerate}
  \item For any $m\in(m_s,m_c)$ and $p\in(p_L(\sigma),p_s(\sigma))$ with $p>1$, there exists at least one global self-similar solution to Eq. \eqref{eq1} in the form \eqref{forward.SS}, with the fast decay \eqref{anomalous} as $\xi\to\infty$. Moreover, in the same range there are infinitely many self-similar solutions presenting the slow decay
\begin{equation}\label{beh.Q1f}
f(\xi)\sim K\xi^{-(\sigma+2)/(p-m)}, \qquad {\rm as} \ \xi\to\infty, \qquad K>0.
\end{equation}
There are no global in time self-similar solutions for $p>p_s(\sigma)$.
  \item For any $m\in(m_s,m_c)$, there exist $p_0(\sigma)>p_s(\sigma)$ such that, for any $p\in(p_s(\sigma),p_0(\sigma))$ there exists at least one self-similar solution in the form \eqref{backward.SS}, presenting finite time blow-up with the fast decay rate \eqref{anomalous} as $\xi\to\infty$. Moreover, there exists $p_1(\sigma)\geq p_0(\sigma)$ such that for $p=p_1(\sigma)$, there are self-similar solutions in the form \eqref{backward.SS} with the behavior
\begin{equation}\label{beh.P3}
f(\xi)\sim\left[\frac{2m(mN-N+2)}{1-m}\right]^{1/(1-m)}\xi^{-2/(1-m)}, \qquad {\rm as} \ \xi\to\infty.
\end{equation}
Finally, there exists $p_2(\sigma)\geq p_1(\sigma)$ such that for $p>p_2(\sigma)$, there is no self-similar solution in the form \eqref{backward.SS} to Eq. \eqref{eq1}.
  \item For any $m\in(0,m_s]$, there are no self-similar solutions to Eq. \eqref{eq1} with any possible decay as $|x|\to\infty$, in any of the two forms \eqref{forward.SS} or \eqref{backward.SS}.
\end{enumerate}
\end{theorem}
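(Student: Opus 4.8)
The plan is to obtain the entire classification by transporting the results of \cite{IMS23b}, which cover the complementary range $1<p<p_L(\sigma)$, through the self-map of Theorem~\ref{th.transf}. Since that map fixes $m$ and $p$, sends $(N,\sigma)\mapsto(\overline N,\overline\sigma)$, and is an invertible self-map (in fact an involution, as one checks that $\overline{\overline N}=N$ and $\overline{\overline\sigma}=\sigma$), it induces a bijection between radial solutions, hence between self-similar solutions, of the two parameter sets, so that existence, multiplicity and non-existence statements transfer verbatim. The first step is purely algebraic. Using \eqref{const.L} and \eqref{transf.exp} I would compute $\overline L=2mL/(mN-N+2)$, so that $\overline L$ and $L$ have opposite signs in the range $m\in(0,m_c)$ (where $mN-N+2<0$); thus $p>p_L(\sigma)$ is equivalent to $p<p_L(\overline\sigma)$, exactly the regime of \cite{IMS23b}. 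Likewise one finds $\overline\sigma+2=-2(N-2)(p-p_c(\sigma))/(mN-N+2)$, so $\overline\sigma>-2$ holds iff $p>p_c(\sigma)$; since $p_L(\sigma)-p_c(\sigma)=[N(1-m)-2](\sigma+2)/[2(N-2)]>0$ under \eqref{range.exp}, admissibility of the transformed problem is automatic. Finally, the Sobolev symmetry \eqref{symm} gives $p<p_s(\sigma)\Leftrightarrow p>p_s(\overline\sigma)$, and, as recorded in the remark after Theorem~\ref{th.transf}, the map interchanges $m\in(m_s,m_c)$ (relative to $N$) with $m\in(0,m_s)$ (relative to $\overline N$).

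The second step sets up the dictionary at the level of profiles. Because \eqref{transf} acts only on the spatial variable and leaves $t$ untouched, it sends the forward ansatz \eqref{forward.SS} into a forward ansatz and the backward ansatz \eqref{backward.SS} into a backward ansatz, with transformed exponents $\overline\beta=\theta\beta$ and $\overline\alpha=\alpha-\beta(N-2)/m$; one checks that these coincide with \eqref{SS.exp} evaluated at $(\overline N,\overline\sigma,\overline L)$, so that the profile correspondence $\overline f(\overline\xi)=C_1^{-1}\xi^{(N-2)/m}f(\xi)$, $\overline\xi=C_2\xi^{\theta}$, carries solutions of \eqref{ODE.forward}--\eqref{ODE.backward} to solutions of the same equations with barred parameters. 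Since $\theta<0$, this correspondence interchanges the behaviors at $\xi=0$ and $\xi=\infty$: the fast decay \eqref{anomalous}, whose exponent $(N-2)/m$ is precisely the one used in \eqref{transf}, is matched with a finite positive value of $\overline f$ at the origin, while a regular positive profile at the origin is matched with a decaying tail of $\overline f$; the slow decay \eqref{beh.Q1f}, whose exponent equals $\alpha/\beta$, and the critical rate \eqref{beh.P3} are likewise sent to prescribed behaviors read off directly from \eqref{transf}. This yields a bijection between admissible self-similar profiles on the two sides.

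With this dictionary the three cases follow by transporting \cite{IMS23b}. For part~(1), $m\in(m_s,m_c)$ and $p\in(p_L(\sigma),p_s(\sigma))$ correspond to $m\in(0,m_s)$ (relative to $\overline N$) and $p_s(\overline\sigma)<p<p_L(\overline\sigma)$, a range in which \cite{IMS23b} provides a profile regular at the origin (transported to the fast-decay solution) together with a one-parameter family carrying the generic tail (transported to the infinitely many slow-decay solutions \eqref{beh.Q1f}); the non-existence of global solutions for $p>p_s(\sigma)$ is the transform of the corresponding forward non-existence for $p<p_s(\overline\sigma)$. For part~(2), $p>p_s(\sigma)$ corresponds to $p<p_s(\overline\sigma)$, and the blow-up profiles together with the thresholds $p_s(\sigma)<p_0(\sigma)\le p_1(\sigma)\le p_2(\sigma)$ are the images of the existence, borderline and non-existence thresholds of \cite{IMS23b}, the rate \eqref{beh.P3} at $p=p_1(\sigma)$ being the transform of the associated borderline profile. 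For part~(3), $m\in(0,m_s]$ corresponds to $m\in[m_s,m_c)$ (relative to $\overline N$), where \cite{IMS23b} gives no admissible profile, hence none exists in the original variables. The identity $p_s(\sigma)-p_L(\sigma)=(N+2)(m-m_s)(\sigma+2)/[2(N-2)]$ provides a useful consistency check, showing that the interval $(p_L(\sigma),p_s(\sigma))$ is nonempty exactly when $m>m_s$ and collapses when $m\le m_s$.

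The main obstacle lies in the second step rather than the first. One must show that the transformation preserves admissibility of profiles---positivity, regularity at the origin, and the allowed decay rates at infinity---and that the exponents in \eqref{anomalous}, \eqref{beh.Q1f} and \eqref{beh.P3} correspond exactly, and not merely to leading order, to the behaviors singled out in \cite{IMS23b}, so that the precise count (one fast-decay solution, infinitely many slow-decay ones) is transported faithfully. A secondary point is to define the thresholds $p_0,p_1,p_2$ as the pullbacks of the thresholds of \cite{IMS23b} and to confirm the ordering $p_s(\sigma)<p_0(\sigma)\le p_1(\sigma)\le p_2(\sigma)$; since $\overline N$ is independent of $p$ while $p\mapsto\overline\sigma(p)$ is affine and increasing, this reduces to pulling back the barred thresholds and checking monotonicity in $p$.
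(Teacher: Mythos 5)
Your proposal is correct and follows essentially the same route as the paper: transport the classification of \cite{IMS23b} through the self-map of Theorem~\ref{th.transf}, using the sign reversal of $L$ (the paper's \eqref{interm24}), the Sobolev symmetry \eqref{symm}, the interchange of the ranges $(0,m_s)$ and $(m_s,m_c)$, and the profile correspondence $f(\xi)=C_1\xi^{-(N-2)/m}\overline{f}(C_2\xi^{\theta})$ with the resulting dictionary of local behaviors. Your additional observations (the involution property $\overline{\overline{N}}=N$, $\overline{\overline{\sigma}}=\sigma$, the explicit formulas for $\overline{L}$ and $\overline{\sigma}+2$, and the monotonicity of $p\mapsto\overline{\sigma}(p)$ used to order the thresholds $p_0,p_1,p_2$) are accurate and consistent with the paper's computations.
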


\medskip

\noindent \textbf{Remarks. 1}. The order of the critical exponents is implicitly understood in the statements of the theorems. Indeed, in the range of main interest $m\in(m_s,m_c)$ (as seen in Theorem \ref{th.subcrit}) we have $p_c(\sigma)<p_L(\sigma)<p_s(\sigma)$, as one can readily check. Moreover, $p_L(\sigma)>1$ is equivalent to $\sigma>0$, while $p_s(\sigma)>1$ holds true always if $m>m_s$ and $\sigma\geq0$.

\textbf{2}. All our results hold true and are new for $\sigma=0$, which is just a particular case in the analysis.

\textbf{3. Regularity.} A similar discussion related to the regularity at $\xi=0$ as in \cite{IS25c} applies here. In particular, while for $\sigma\geq0$ the radially symmetric self-similar solutions we obtain are of class $C^2(\real^N)$ and thus classical solutions, for $\sigma<0$ they can be of class $C^1$ (if $\sigma\in(-1,0)$) or even only $C^{0,\gamma}$ for some $\gamma\in(0,1)$ (if $\sigma\in(-2,-1)$), forming a peak as noticed in rather similar situations \cite{RV06, IL25}. We expect that the optimal regularity at $\xi=0$ as in \cite[Section 3.3]{IL25} still holds true for our self-similar solutions.

We are now ready to give the proofs of the two theorems stated as main results of this paper.

\section{Proof of Theorem \ref{th.transf}}\label{sec.transf}

This section is devoted to the proof of Theorem \ref{th.transf}, thus we assume throughout it that $N>2$ and $0<m<m_c$. Moreover, it is straightforward to check that
\begin{equation}\label{interm1.bis}
\begin{split}
&p_c(\sigma)<p_L(\sigma) \qquad {\rm iff} \qquad m\in(0,m_c), \\
&p_s(\sigma)<p_L(\sigma) \qquad {\rm iff} \qquad m\in(0,m_s),
\end{split}
\end{equation}
hence we have by default $p>p_c(\sigma)$ in this range. We show below that the transformation \eqref{transf} works as claimed. Let us fix from the beginning that, since we only deal with equations expressed in radially symmetric variables, we will make the convention that the dimensions appearing in the calculations (that is, $N$ and $\overline{N}$) will be considered as real parameters, as they just appear as coefficients in the differential equations in variables $(r,t)$ (respectively $(\overline{r},t)$).
\begin{proof}[Proof of Theorem \ref{th.transf}]
Of course, one can say that the proof follows from direct calculation. But in order to be honest with the reader, we will show how we obtained the transformation. Trying to generalize to our equation the transformation in \cite[Case 2, Section 2.1]{IRS13} and thus taking the first exponent from there, we plug in the radial equation \eqref{eq1.rad} the ansatz
\begin{equation}\label{interm20}
\overline{u}(\overline{r},t)=\frac{1}{C_1}r^{(N-2)/m}u(r,t), \qquad \overline{r}=C_2r^{\theta}, \qquad r\in(0,\infty),
\end{equation}
with $C_1$, $C_2$, $\theta$ to be determined. By replacing the ansatz \eqref{interm20} into Eq. \eqref{eq1.rad}, we get
\begin{equation}\label{interm21}
\begin{split}
\overline{u}_t&=\theta^2C_1^{m-1}C_2^2r^{-(N-2)(m-1)/m+2\theta-2}(\overline{u}^m)_{\overline{r}\overline{r}}\\&+\theta C_1^{m-1}C_2^2(\theta-N+2)r^{-(N-2)(m-1)/m+\theta-2}(\overline{u}^m)_{\overline{r}}\\
&+C_1^{p-1}C_2^{\sigma}r^{\sigma-(N-2)(p-1)/m}\overline{u}^p.
\end{split}
\end{equation}
We next impose the condition that the coefficient of the first term on the right hand side to be equal to one, and the last term in the right hand side should have no constant in front, in order to match \eqref{interm21} to Eq. \eqref{eq1.rad}. We thus get the equalities:
$$
2\theta-\frac{(N-2)(m-1)}{m}-2=0, \qquad C_1^{m-1}C_2=\frac{1}{\theta^2}, \qquad C_1^{p-1}C_2^{\sigma}=1,
$$
from which we readily derive that $\theta=(mN-N+2)/2m<0$ (since $m\in(0,m_c)$) and the expressions of $C_1$, $C_2$ in \eqref{transf}. Moreover, by further identifying \eqref{interm21} to Eq. \eqref{eq1.rad} in variables $(\overline{u},\overline{r})$ we also obtain that
$$
\overline{\sigma}=\frac{1}{\theta}\left[\sigma-\frac{(N-2)(p-1)}{m}\right], \qquad \overline{N}-1=\frac{\theta-N+2}{\theta},
$$
which lead to the expressions of $\overline{\sigma}$, $\overline{N}$ given in \eqref{transf.exp}. All the previous calculations are valid for classical solutions in $r\in(0,\infty)$, but can be extended to weak solutions at $r=0$ similarly as in \cite[Section 4]{IRS13}, since $\theta<0$. The local behavior of $\overline{u}$ at $\overline{r}=0$ is then determined by the behavior of $u$ as $r\to\infty$, and the proof is complete.
\end{proof}
Let us notice a few facts related to the parameters appearing in Eq. \eqref{eq1.rad}. First, if $m<m_c$ and $N>2$, we observe that
\begin{equation}\label{interm22}
\overline{N}-2=-\frac{2m(N-2)}{mN-N+2}>0,
\end{equation}
hence $\overline{N}>2$. In particular, it follows that the interval $(2,\infty)$ is mapped onto itself by the transformation from $N$ to $\overline{N}$. Furthermore, an easy calculation leads to
\begin{equation}\label{interm23}
m-\frac{\overline{N}-2}{\overline{N}+2}=\frac{m(m-m_s)}{m-2m_s}<0,
\end{equation}
provided $m\in(m_s,2m_s)$. Since
$$
2m_s-m_c=\frac{(N-2)^2}{N(N+2)}>0,
$$
we deduce that, if $m\in(m_s,m_c)$, we obtain that $m<(\overline{N}-2)/(\overline{N}+2)=m_s(\overline{N})$, while if $m\in(0,m_s)$, then $m>m_s(\overline{N})$. We furthermore remark that
\begin{equation}\label{interm24}
\overline{\sigma}-\frac{2(p-1)}{1-m}=-\frac{2mL}{(1-m)(mN-N+2)}>0,
\end{equation}
whence $\overline{\sigma}>\sigma_{L}:=2(p-1)/(1-m)$, provided $L>0$ (that is, $p>p_L(\sigma)$ or equivalently $\sigma<\sigma_L$). Thus, we match the condition $p>p_L(\sigma)$ into the condition $p<p_L(\overline{\sigma})$. One more step is to observe that in our range, we always have $p>p_c(\overline{\sigma})$, since
\begin{equation}\label{interm25}
p-p_c(\overline{\sigma})=p-\frac{m(\overline{N}+\overline{\sigma})}{\overline{N}-2}=\frac{m(\sigma+2)}{N-2}>0,
\end{equation}
as we are always under the condition $N\geq3$ (or $N>2$ as a real parameter with the convention we did). Finally, the symmetry \eqref{symm} of $p_s(\sigma)$ and $p_s(\overline{\sigma})$ with respect to $p$ follows by noticing, after straightforward calculations employing \eqref{transf.exp}, that
$$
\overline{N}+2\overline{\sigma}+2=\frac{2(mN+2m+2m\sigma+4p-2Np)}{mN-N+2},
$$
which, together with \eqref{interm22}, gives
\begin{equation*}
p-p_s(\overline{\sigma})=p-\frac{m(\overline{N}+2\overline{\sigma}+2)}{\overline{N}-2}==\frac{m(N+2\sigma+2)-p(N-2)}{N-2}=p_s(\sigma)-p.
\end{equation*}
All these properties will be of a great use in the proof of Theorem \ref{th.subcrit}, given in the next section.

\section{Proof of Theorem \ref{th.subcrit}}\label{sec.subcrit}

Since self-similar solutions in the form \eqref{forward.SS} or \eqref{backward.SS} are particular cases of radially symmetric solutions to Eq. \eqref{eq1}, we are now in a position to complete the proof of Theorem \ref{th.subcrit} by employing the transformation \eqref{transf} and all the previous connections between the sets of parameters of Eq. \eqref{eq1.rad} before and after the transformation.
\begin{proof}[Proof of Theorem \ref{th.subcrit}]
Gathering \eqref{interm22}, \eqref{interm23}, \eqref{interm24} and \eqref{symm}, we conclude that we are in a position to apply the results in \cite{IMS23b} for the equation in variables $(\overline{r},t)$ and exponents $\overline{N}$, $\overline{\sigma}$, $p$ and $m$. Indeed, an easy inspection of the proofs in \cite{IMS23b} shows that, if considering $\overline{N}$ as a real parameter in the equations of the dynamical systems, all the results therein hold true for $\overline{N}>2$, thus can be mapped to the interval $N>2$ (and in particular for integer dimensions $N\geq3$) in our case, according to \eqref{interm22}. Then, if $m\in(0,m_s)$, we deduce 
$$
m>m_s(\overline{N}):=\frac{\overline{N}-2}{\overline{N}+2}
$$ 
and no self-similar solutions exist according to \cite{IMS23b}. We are left only with the range $m\in(m_s,m_c)$ and \eqref{interm25} ensures that $p>p_c(\overline{\sigma})$, exactly the condition leading to existence of solutions in \cite{IMS23b}.

In order to apply the transformation for radially symmetric self-similar solutions, we have next to check how it works on profiles. On the one hand, it is rather obvious to notice that \eqref{transf} preserves the form of the self-similar solution. Let us assume first that $\overline{u}(\overline{r},t)$ is a self-similar solution in backward form \eqref{backward.SS} with a profile $\overline{f}$. Noticing that in bar variables we are in the range $p<p_L(\overline{\sigma})$, according to \eqref{interm24}, we obtain from the analysis in \cite{IMS23b} that the self-similar exponents are given by the same expressions as in \eqref{SS.exp} but with a minus sign in front. Let then $\overline{\alpha}$, $\overline{\beta}$ be these self-similar exponents in bar variables. We have:
\begin{equation*}
\begin{split}
u(r,t)&=C_1r^{-(N-2)/m}(T-t)^{\overline{\alpha}}\,\overline{f}(\overline{r}(T-t)^{\overline{\beta}})
=C_1r^{-(N-2)/m}(T-t)^{\overline{\alpha}}\,\overline{f}(C_2r^{\theta}(T-t)^{\overline{\beta}})\\
&=C_1r^{-(N-2)/m}(T-t)^{\overline{\alpha}}\,\overline{f}\left(C_2(r(T-t)^{\overline{\beta}/\theta})^{\theta}\right)\\
&=C_1(r(T-t)^{\overline{\beta}/\theta})^{-(N-2)/m}(T-t)^{\overline{\alpha}+(N-2)\overline{\beta}/m\theta}\overline{f}\left(C_2(r(T-t)^{\overline{\beta}/\theta})^{\theta}\right)\\
&=C_1\xi^{-(N-2)/m}(T-t)^{-\alpha}\overline{f}(C_2(r(T-t)^{-\beta})^{\theta}),
\end{split}
\end{equation*}
where $\alpha$ and $\beta$ are given by \eqref{SS.exp}. We then deduce the following correspondence:
\begin{equation}\label{transf.SS}
f(\xi)=C_1\xi^{-(N-2)/m}\overline{f}(\overline{\xi}), \qquad \overline{\xi}=C_2\xi^{\theta},
\end{equation}
where $C_1$, $C_2$, $\theta$ are defined in \eqref{transf} and \eqref{transf.exp}. Notice that the previous calculations, due to the change of signs in the exponents $\alpha$ and $\beta$ with respect to the starting ones $\overline{\alpha}$ and $\overline{\beta}$, prove that self-similar solutions presenting \emph{finite time extinction} in bar variables are mapped into self-similar solutions presenting \emph{finite time blow-up} in original variables. A completely similar calculation to the previous one gives the same correspondence \eqref{transf.SS} also when changing global solutions in bar variables into global solutions in the original variables. Let us finally remark that $\theta<0$ (since $m\in(0,m_c)$), thus the transformation is an inversion, reversing the local behaviors from $\overline{\xi}\to\infty$ into $\xi\to0$ and viceversa. We give next the map of changes obtained from \eqref{transf.SS} with respect to the (interesting) local behaviors of profiles:

$\bullet$ if $\overline{f}(\overline{\xi})\sim C\overline{\xi}^{-(\overline{N}-2)/m}$ as $\overline{\xi}\to\infty$, then
$$
f(\xi)\sim C\xi^{-(N-2)/m-(\overline{N}-2)\theta/m}=C, \qquad {\rm as} \ \xi\to0.
$$

$\bullet$ if $\overline{f}\sim C$ as $\overline{\xi}\to0$, then obviously $f(\xi)\sim C\xi^{-(N-2)/m}$ as $\xi\to\infty$.

$\bullet$ if $\overline{f}\sim C\overline{\xi}^{-(\overline{\sigma}+2)/(p-m)}$ as $\overline{\xi}\to\infty$, it follows that
$$
f(\xi)\sim C\xi^{-(N-2)/m-\theta(\overline{\sigma}+2)/(p-m)}=C\xi^{-(\sigma+2)/(p-m)}, \qquad {\rm as} \ \xi\to0,
$$
and viceversa, if the first behavior is taken as $\overline{\xi}\to0$, then the similar one in original variables is taken as $\xi\to\infty$.

$\bullet$ if $\overline{f}\sim C\overline{\xi}^{-2/(1-m)}$ as $\overline{\xi}\to0$, then $f(\xi)\sim C\xi^{-2/(1-m)}$ as $\xi\to\infty$.

It only remains to apply this list of changes to the profiles (with corresponding local behaviors) obtained in \cite{IMS23b}. We first establish the \textbf{global self-similar solutions}, which are mapped into global self-similar solutions given by \cite[Theorem 1.1]{IMS23b} in bar variables. Indeed, the quoted theorem states that, if $\max\{1,p_s(\overline{\sigma})\}<p<p_L(\overline{\sigma})$, there exist global self-similar solutions whose profiles satisfy $f(0)=C>0$ and $\overline{f}(\overline{\xi})\sim \overline{\xi}^{-(\overline{N}-2)/m}$ as $\overline{\xi}\to\infty$. We infer then from \eqref{symm}, \eqref{interm24} and the above list of correspondences obtained as a consequence of \eqref{transf.SS} that there are profiles with local behavior as $\xi\to0$ given by
\begin{equation}\label{beh.P0f}
f(\xi)\sim\left\{\begin{array}{ll}\left[D+\frac{\alpha(1-m)}{2mN}\xi^2\right]^{-1/(1-m)}, & \sigma>0,\\[1mm]
\left[D+\frac{(1-m)\alpha(1+\alpha D^{(p-1)/(m-1)})}{2mN}\xi^2\right]^{-1/(1-m)}, & \sigma=0,\\[1mm]
\left[D+\frac{p-m}{m(N+\sigma)(\sigma+2)}\xi^{\sigma+2}\right]^{-1/(p-m)}, & \sigma\in(-2,0),
\end{array}\right. \qquad D>0,
\end{equation}
and as $\xi\to\infty$ given by \eqref{anomalous} if and only if $p_L(\sigma)<p<p_s(\sigma)$. Moreover, an inspection of the proof of \cite[Theorem 1.1]{IMS23b} shows that there are infinitely many orbits (those corresponding to the nonempty open set $\mathcal{A}$ in the notation therein) that have the local behavior \eqref{beh.Q1f} as $\overline{\xi}\to0$ and the local behavior \eqref{anomalous} as $\overline{\xi}\to\infty$. By \eqref{transf.SS}, these profiles are mapped onto profiles with local behavior \eqref{beh.P0f} but with the slow decay \eqref{beh.Q1f} as $\xi\to\infty$. We have thus completed the proof of the first item in Theorem \ref{th.subcrit}.

With respect to \textbf{self-similar solutions with finite time blow-up}, we have to apply the transformation \eqref{transf.SS} to the outcome of \cite[Theorem 1.2]{IMS23b}. On the one hand, solutions with the fast decay \eqref{anomalous} are shown to exist for $p\in(p_0(\overline{\sigma}),p_s(\overline{\sigma}))$ for some $p_0(\overline{\sigma})>p_c(\overline{\sigma})$. This interval is mapped, via \eqref{symm}, onto an interval $(p_s(\sigma),p_0(\sigma))$. On the other hand, the remark at the end of \cite[Section 8]{IMS23b} gives the existence of at least one exponent $p_*(\overline{\sigma})\in(p_c(\overline{\sigma}),p_s(\overline{\sigma}))$ such that for $p=p_*(\overline{\sigma})$, there exists one self-similar profile with local behaviors
\begin{equation*}
\overline{f}(\overline{\xi})\sim C\overline{\xi}^{-2/(1-m)}, \qquad {\rm as} \ \overline{\xi}\to0, \qquad
\overline{f}(\overline{\xi})\sim C\overline{\xi}^{-(\overline{N}-2)/m}, \qquad {\rm as} \ \overline{\xi}\to\infty.
\end{equation*}
The list of correspondences above between local behaviors, together with \eqref{interm25} and \eqref{symm}, ensure then the existence of the exponent $p_1(\sigma)\geq p_0(\sigma)$ for which there exists a profile with local behavior as $\xi\to0$ given by
\begin{equation}\label{beh.P0b}
f(\xi)\sim\left\{\begin{array}{ll}\left[D-\frac{\alpha(1-m)}{2mN}\xi^2\right]^{-1/(1-m)}, & \sigma>0,\\[1mm]
\left[D-\frac{(1-m)\alpha(1-\alpha D^{(p-1)/(m-1)})}{2mN}\xi^2\right]^{-1/(1-m)}, & \sigma=0,\\[1mm]
\left[D+\frac{p-m}{m(N+\sigma)(\sigma+2)}\xi^{\sigma+2}\right]^{-1/(p-m)}, & \sigma<0,\end{array}\right., \quad D>0,
\end{equation}
and with decay as $\xi\to\infty$ given by \eqref{beh.P3}. The proof of the second item in Theorem \ref{th.subcrit} is then completed by the non-existence range in \cite[Theorem 1.2]{IMS23b}, and the same happens with the third and last item in Theorem \ref{th.subcrit}, which follows from corresponding non-existence ranges in \cite[Theorem 1.1 and Theorem 1.2]{IMS23b}. We omit the details of these last non-existence results, as they are straightforward from the transformation.
\end{proof}
We complete this section with a discussion of the \emph{evolution of hot spots} (that is, maximum points) of the self-similar solutions classified in Theorem \ref{th.subcrit}, and in particular, of their \emph{blow-up set} (when finite time blow-up takes place). We first notice from results in \cite{IMS23b} and our transformation that the profiles of the global self-similar solutions are decreasing and have a maximum point at $\xi=0$. It follows that the corresponding self-similar solutions decay to zero as $t\to\infty$ as follows:
\begin{equation}\label{sol.forward}
\|u(t)\|_{\infty}=t^{-\alpha}f(0), \qquad t>0,
\end{equation}
and the same occurs for the evolution of a fixed point $|x|=r>0$, since $\xi(t)=|x|t^{-\beta}\to0$ as $t\to\infty$. On the contrary, the profiles of the solutions with finite time blow-up with $\sigma>0$ are increasing in a right neighborhood of $\xi=0$ and attain their maximum at some point $\xi_0>0$. Then
\begin{equation}\label{sol.backward}
\|u(t)\|_{\infty}=(T-t)^{-\alpha}f(\xi_0)\to\infty, \qquad {\rm as} \ t\to T,
\end{equation}
hence these profiles have a blow-up rate $(T-t)^{-\alpha}$, and their maximum at time $t\in(0,T)$ is attained for $|x|=(T-t)^{\beta}\xi_0\to0$ as $t\to T$. More interestingly, given a fixed point $|x|=r>0$, we have, for our specific tails \eqref{anomalous}, \eqref{beh.Q1f} or \eqref{beh.P3} and as $t\to T$,
$$
u(x,t)\sim (T-t)^{-\alpha}(|x|(T-t)^{-\beta})^{\gamma}=|x|^{\gamma}(T-t)^{-\alpha-\beta\gamma}<\infty,
$$
where
$$
\gamma\in\left\{-\frac{N-2}{m},-\frac{2}{1-m},-\frac{\sigma+2}{p-m}\right\}
$$
is the decay exponent as $\xi\to\infty$ of the profile of the solution. This follows by direct calculation, since for any of the three possible values of $\gamma$ we have $-\alpha-\beta\gamma\geq0$. Similar considerations with respect to the evolution of a fixed point hold true also for the blow-up solutions with $\sigma<0$, which are decreasing. We thus find that \emph{the blow-up set} of all the self-similar solutions with finite time blow-up \emph{is the singleton $\{0\}$}.

\section{Conclusion. A final discussion about self-similar solutions to Eq. \eqref{eq1}}

In this final section, we gather in form of a conclusion the most interesting fact that springs out of our papers \cite{IS22c, IMS23b, IS25c} and the current one: the enormous \emph{diversity and richness of phenomena} that the fast diffusion with spatially inhomogeneous source involves. In particular, we have noticed in these works that, for $m\in(0,m_s)$ we can only have self-similar solutions presenting finite time extinction or grow-up (that is, global solutions unbounded as $t\to\infty$), see \cite{IMS23b}, while for $m\in(m_s,1)$ we can only have solutions with finite time blow-up or global solutions decaying to zero as $t\to\infty$, see the present work and \cite{IS25c}. Let us observe that, for $\sigma=0$, we are always in the range $p>p_L(\sigma)$ and thus self-similar solutions with finite time extinction or with grow-up as $t\to\infty$ do not exist, but they start to exist, as detailed in Figure \ref{fig1} below, for any $\sigma>0$ and $1<p<p_L(\sigma)$. This is an effect of the presence of the variable coefficient $|x|^{\sigma}$, with respect to the homogeneous equation with $\sigma=0$.

For a complete classification, we detail below all these different possible dynamical properties observed at the level of self-similar solutions, plotting in Figure \ref{fig1} the regions in which they occur (limited by the critical exponents in \eqref{crit.exp}, \eqref{crit.exp.Fujita} and \eqref{crit.exp.Sobolev}), for generic $\sigma\geq0$, depending on the ranges of $m\in(0,1)$ and $p>1$.

\begin{figure}[ht!]
  \begin{center}
  \includegraphics[width=12cm,height=12cm]{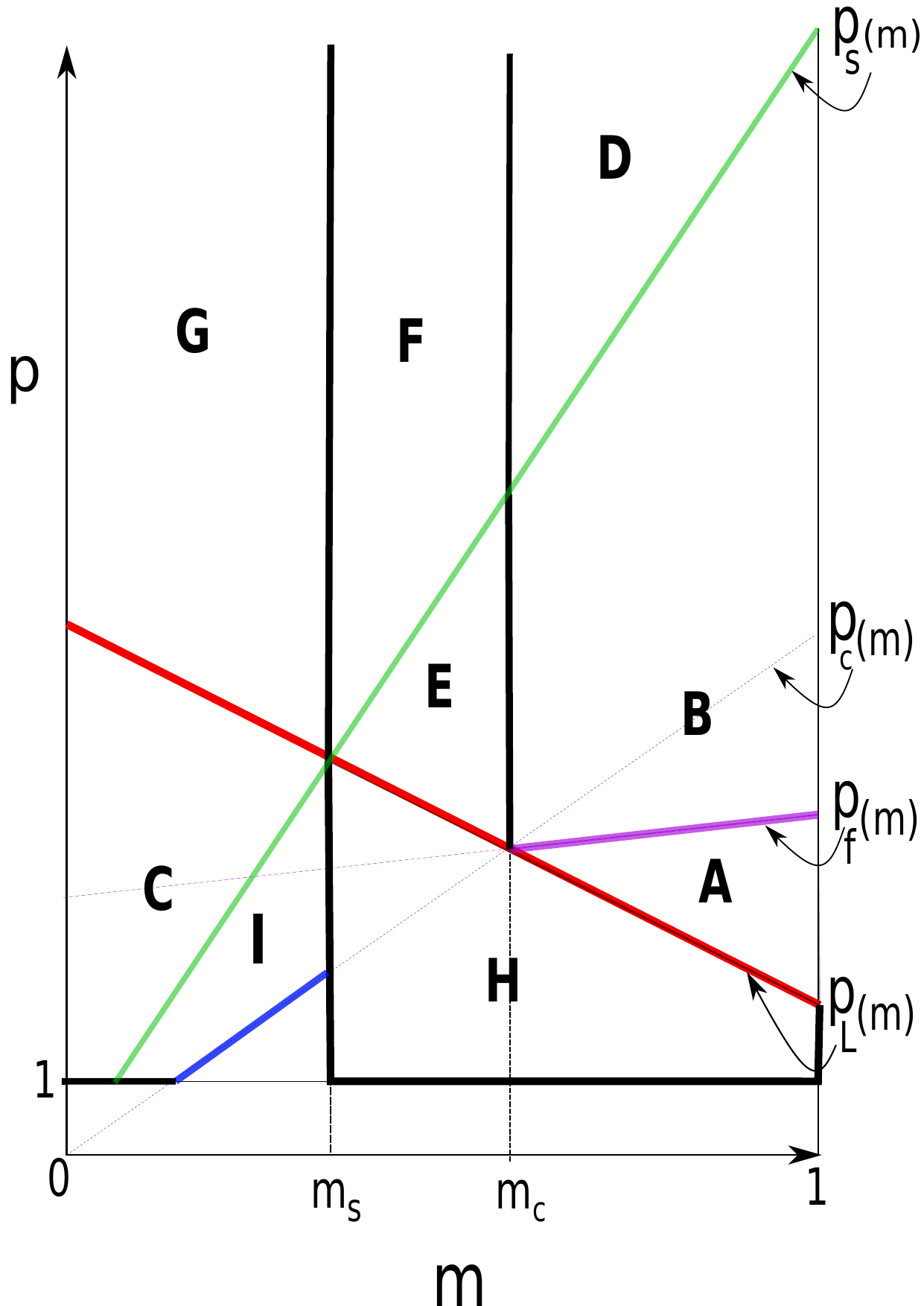}
  \end{center}
  \caption{Regions of the plane $(p,m)$ with different behavior for self-similar solutions, for a generic $\sigma\geq0$.}\label{fig1}
\end{figure}

Indeed, we have established in the above mentioned papers that self-similar solutions to Eq. \eqref{eq1}, in dependence of the exponents of the equation, present the following different dynamical behaviors as time advances:

$\bullet$ solutions with \emph{finite time blow-up}: an outcome of \cite{IS25c} and Theorem \ref{th.subcrit} (in the range $p>p_s(\sigma)$ if subcritical) in the current work. For $\sigma\geq0$, this range corresponds to the region $\mathbf{F}$ in Figure \ref{fig1}, while region $\mathbf{B}$ (and also $\mathbf{A}$ if $\sigma=0$) in Figure \ref{fig1} corresponds to the non-existence range for such solutions.

$\bullet$ global solutions \emph{vanishing as $t\to\infty$}: again an outcome of \cite{IS25c} and Theorem \ref{th.subcrit} in the current paper. For $m\geq m_c$, there are solutions having a specific optimal (fast) decay as $|x|\to\infty$ given by \eqref{beh.P3}, and they exist in the range corresponding to the region $\mathbf{B}$ in Figure \ref{fig1}. Moreover, in the range corresponding to the region $\mathbf{D}$ in Figure \ref{fig1} there are such solutions, but only with a slow decay as $|x|\to\infty$ given by \eqref{beh.Q1f}. In the subcritical range $m<m_c$, there exist such solutions with both optimal (fast) decay \eqref{anomalous} and the slow decay \eqref{beh.Q1f} as $|x|\to\infty$, in the range corresponding to the region $\mathbf{E}$ in Figure \ref{fig1}.

$\bullet$ global solutions \emph{growing up as $t\to\infty$}: they have been established in \cite[Theorem 1.1]{IMS23b} for $m\in(0,m_s)$, $\sigma>0$ and $p_s(\sigma)<p<p_L(\sigma)$. This range corresponds to the region $\mathbf{C}$ in Figure \ref{fig1}.

$\bullet$ solutions with \emph{finite time extinction}: they have been obtained in \cite[Theorem 1.2]{IMS23b}, for $m\in(0,m_s)$ and either $p_c(\sigma)<p<p_s(\sigma)$ but $p$ close to $p_s(\sigma)$, where they have the fast decay \eqref{anomalous} as $|x|\to\infty$, or $p\in(p_s(\sigma),p_L(\sigma))$, where they have the slow decay \eqref{beh.Q1f} as $|x|\to\infty$. The former range corresponds to the region $\mathbf{I}$ in Figure \ref{fig1}, while the latter corresponds again to the region $\mathbf{C}$.

$\bullet$ \emph{eternal solutions} in exponential form: they have been deduced in \cite{IS22c} for the limiting case $p=p_L(\sigma)$. Again, they can have either exponential grow-up as $t\to\infty$ if $m\in(0,m_s)$, or exponential decay to zero as $t\to\infty$ if $m\in(m_s,m_c)$.

$\bullet$ \emph{stationary solutions} for $p=p_s(\sigma)$, in all the cases of $m\in(0,1)$ and $\sigma>-2$, $N\geq3$, with an explicit formula given in \cite{IMS23b, IS25c}.

$\bullet$ \emph{non-existence of any type of self-similar solutions} holds true in the following ranges: $m\in(0,m_s)$ and $p>p_L(\sigma)$, corresponding to the region $\mathbf{G}$ in Figure \ref{fig1}, as established in Theorem \ref{th.subcrit}, $m\in(m_s,1)$ and $1<p<p_L(\sigma)$, corresponding to the region $\mathbf{H}$ in Figure \ref{fig1}, as established in \cite{IMS23b}, and in the range $m\in[m_c,1)$ and (at least a part of) the range $p\in(p_L(\sigma),p_F(\sigma))$, corresponding to the region $\mathbf{A}$ in Figure \ref{fig1}, as established in \cite{IS25c}.

We strongly believe that this bunch of different solutions will be useful in the future to open up the way towards a rigorous functional analytic study of Eq. \eqref{eq1} and of its large time behavior either as $t\to\infty$ (for global solutions) or as $t\to T$ (for solutions with finite time blow-up or finite time extinction).

\bigskip

\noindent \textbf{Acknowledgements} R. G. I. and A. S. are partially supported by the Project PID2020-115273GB-I00 and by the Grant RED2022-134301-T funded by MCIN/AEI/10.13039/ \\ 501100011033 (Spain).

\bibliographystyle{plain}

\begin{thebibliography}{1}

\bibitem{BF24}
M. Bonforte and A. Figalli, \emph{The Cauchy-Dirichlet problem for the fast diffusion equation on bounded domains}, Nonlinear Anal., \textbf{239} (2024), Paper No. 113394, 55 pp.

\bibitem{dPS01}
M. del Pino and M. Saez, \emph{On the extinction profile for solutions of $u_t=\Delta u^{(N-2)/(N+2)}$}, Indiana Univ. Math. Journal, \textbf{50} (2001), no. 2, 612-628.

\bibitem{FT00}
S. Filippas and A. Tertikas, \emph{On similarity solutions of a heat equation with a nonhomogeneous nonlinearity}, J. Differential Equations, \textbf{165} (2000), no. 2, 468-492.


\bibitem{GP97}
V. Galaktionov and L. A. Peletier, \emph{Asymptotic behavior near finite time extinction for the fast diffusion equation}, Arch. Rational Mech. Anal., \textbf{139} (1997), no. 1, 83-98.

\bibitem{GV97}
V. A. Galaktionov and J. L. V\'azquez, \emph{Continuation of blowup
solutions of nonlinear heat equations in several space dimensions},
Comm. Pure Appl. Math, \textbf{50} (1997), no. 1, 1-67.

\bibitem{IL25}
R. G. Iagar and Ph. Lauren\ced{c}ot, \emph{A Hardy-H\'enon equation in $\mathbb{R}^N$ with sublinear absorption}, Calc. Var. Part. Differential Equations, \textbf{64} (2025), no. 4, Article no. 74, 22p.

\bibitem{IMS23}
R. G. Iagar, A. I. Mu\~{n}oz and A. S\'anchez, \emph{Self-similar solutions preventing finite time blow-up for reaction-diffusion equations with singular potential}, J. Differential Equations, \textbf{358} (2023), 188-217.

\bibitem{IMS23b}
R. G. Iagar, A. I. Mu\~{n}oz and A. S\'anchez, \emph{Extinction and non-extinction profiles for the sub-critical fast diffusion equation with weighted source}, Nonlinear Anal., \textbf{255} (2025), Article no. 113772, 27p.

\bibitem{IRS13}
R. G. Iagar, G. Reyes and A. S\'anchez, \emph{Radial equivalence of nonhomogeneous nonlinear diffusion equations}, Acta Appl. Math., \textbf{123} (2013), 53-72.

\bibitem{IS22c}
R. G. Iagar and A. S\'anchez, \emph{Anomalous self-similar solutions of exponential type for the subcritical fast diffusion equation with weighted reaction}, Nonlinearity, \textbf{35} (2022), no. 7, 3385-3416.

\bibitem{IS23}
R. G. Iagar and A. S\'anchez, \emph{Radial equivalence and applications to the qualitative theory for a class of non-homogeneous reaction-diffusion equations}, Math. Models Appl. Sci., \textbf{46} (2023), no. 14, 15799-15827.

\bibitem{IS25}
R. G. Iagar and A. S\'anchez, \emph{Existence of blow-up self-similar solutions for the supercritical quasilinear reaction-diffusion equation}, Discrete Contin. Dyn. Syst., \textbf{45} (2025), no. 5, 1399-1433.

\bibitem{IS25c}
R. G. Iagar and A. S\'anchez, \emph{Global solutions versus finite time blow-up for the supercritical fast diffusion equation with inhomogeneous source}, Submitted (2025).

\bibitem{Ki93}
J. R. King, \emph{Self-similar behavior for the equation of fast nonlinear diffusion}, Phil. Trans. Roy. Soc. London A, \textbf{343} (1993), 337-375.

\bibitem{PZ95}
M. A. Peletier and H. Zhang, \emph{Self-similar solutions of a fast diffusion equation that do not conserve mass}, Diff. Int. Equations, \textbf{8} (1995), no. 8, 2045-2064.

\bibitem{Qi98}
Y.-W. Qi, \emph{The critical exponents of parabolic equations and blow-up in $\real^N$}, Proc. Roy. Soc. Edinburgh Section A, \textbf{128} (1998), no. 1, 123-136.

\bibitem{RV06}
G. Reyes, J. L. V\'{a}zquez, \emph{The Cauchy problem for the
inhomogeneous porous medium equation}, Network Heterog. Media
\textbf{1} (2006), 337-351.

\bibitem{S4}
A. A. Samarskii, V. A. Galaktionov, S. P. Kurdyumov, and A. P.
Mikhailov, \emph{Blow-up in quasilinear parabolic problems}, de
Gruyter Expositions in Mathematics, \textbf{19}, W. de Gruyter,
Berlin, 1995.

\bibitem{Su02}
R. Suzuki, \emph{Existence and nonexistence of global solutions of
quasilinear parabolic equations}, J. Math. Soc. Japan, \textbf{54}
(2002), no. 4, 747-792.

\bibitem{VazSmooth}
J. L. V\'azquez, \emph{Smoothing and Decay Estimates for Nonlinear Diffusion Equations. Equations of Porous Medium Type}, Oxford Lecture Series in Mathematics and its Applications 33, Oxford University Press, 2006.

\end{thebibliography}

\end{document}